\theoremstyle{plain}
\newtheorem{theorem}{Theorem}
\newtheorem{lemma}[theorem]{Lemma}
\newtheorem{corollary}[theorem]{Corollary}
\theoremstyle{definition}
\theoremstyle{remark}
\newtheorem{remark}[theorem]{Remark}
\newcommand{\T}{\mathbb{T}}
\newcommand{\R}{\mathbb{R}}
\newcommand{\Z}{\mathbb{Z}}
\newcommand{\C}{\mathbb{C}}
\newcommand{\N}{\mathbb{N}}
\newcommand{\p}{\mathcal{P}}
\newcommand{\pp}{\mathcal{Q}}
\newcommand{\tf}{\mathcal{T}}
\newcommand{\ip}[2]{\left\langle#1,#2\right\rangle}
\newcommand{\vev}[1]{\left\langle#1\right\rangle}
\newcommand{\der}{\mathrm{d}}
\renewcommand{\phi}{\varphi}
\newcommand{\abs}[1]{\left| #1 \right|}
\newcommand{\aabs}[1]{\left\| #1 \right\|}
\title{On Radon transforms on tori}
\author{Joonas Ilmavirta}
\address{Department of Mathematics and Statistics, University of Jyv\"askyl\"a, P.O.Box 35 (MaD) FI-40014 University of Jyv\"askyl\"a, Finland}
\email{joonas.ilmavirta@jyu.fi}
\date{\today}
\begin{document}

\begin{abstract}
We show injectivity of the X-ray transform and the $d$-plane Radon transform for distributions on the $n$-torus, lowering the regularity assumption in the recent work by Abouelaz and Rouvi\`ere.
We also show solenoidal injectivity of the X-ray transform on the $n$-torus for tensor fields of any order, allowing the tensors to have distribution valued coefficients.
These imply new injectivity results for the periodic broken ray transform on cubes of any dimension.
\end{abstract}

\keywords{Ray transforms, inverse problems, geometric optics, Fourier analysis}

\subjclass[2010]{46F12, 44A12, 53A45}

\maketitle

\section{Introduction}
\label{sec:intro}

The question we set out to study is whether a function on the standard flat torus $\T^n=\R^n/\Z^n$ is uniquely determined by its integrals over all periodic geodesics.
We also generalize this in two directions: we can either allow integrals over periodic $d$-planes for $d<n$ or allow the function to be a tensor field (which is to be determined up to gauge).
The answer to all of these questions is affirmative, and the functions involved can be any distributions.

The first question was studied by Abouelaz and Rouvi\`ere~\cite{AR:radon-torus}, and they showed uniqueness for functions on $\T^n$ that have Fourier transforms in $\ell^1(\Z^n)$, which includes $C^{n+1}(\T^n)$.
The same result on the two dimensional torus~$\T^2$ was given earlier by Strichartz~\cite{S:radon-variations}.
Later Abouelaz~\cite{A:torus-plane-radon} considered the same problem with $d$-planes and showed uniqueness under the same regularity assumption.
Their proof was based on constructing a normal operator, which caused the regularity assumption; our approach is different, and the assumption may be relaxed significantly.
A brief comparison of these methods is given in section~\ref{sec:methods}.

To the best of our knowledge, the result for tensor fields is new.
Injectivity of the X-ray transform for tensors of any order was shown by
Sharafutdinov~\cite{S:tensor-book} in the Euclidean space
and
Croke and Sharafutdinov~\cite{CS:negative-curvature-spectral} on closed manifolds with negative curvature.
There are also recent extensions to Anosov manifolds~\cite{PSU:anosov,PSU:anosov-mfld}.
These results are related to spectral geometry; this connection is elaborated on in section~\ref{sec:other}.
For the X-ray transform of tensor fields in~$\R^n$, see~\cite{S:tensor-book}; for the Radon transform of distributions on~$\R^n$, see~\cite{book-helgason}.

The rest of this introduction is organized as follows.
In section~\ref{sec:transform} we define the integral transforms and related basic concepts that we will use.
After that we are ready to state our results in section~\ref{sec:results}.
In sections~\ref{sec:methods} and~\ref{sec:other} we give an overview of the methods used and relations to other problems.
The necessary auxiliary results and definitions are given in section~\ref{sec:tools} and the results are finally proved in section~\ref{sec:proof}.

\subsection{Integral transforms}
\label{sec:transform}

The uniqueness problems studied here can be formulated in terms of injectivity of certain integral transforms.
We first define our transforms for functions in $\tf=C^\infty(\T^n)$; generalization by duality to distributions in~$\tf'$ is given in section~\ref{sec:distr}.
We parametrize periodic geodesics by $\pp=\Z^n\setminus\{0\}$ by letting $(x,v)\in\T^n\times\pp$ correspond to the periodic geodesic $[0,1]\ni t\mapsto x+vt$.
For any~$f\in\tf$, $x\in\T^n$ and~$v\in\pp$ we define
\begin{equation}
Rf(x,v)
=
\int_0^1f(x+vt)\der t.
\end{equation}
We refer to the function~$Rf$ defined on $\T^n\times\pp$ as the Radon transform of~$f$.
The function $Rf$ is somewhat redundantly defined, since $Rf(x+av,nv)=Rf(x,v)$ for any $a\in\R$ and $n\in\Z\setminus\{0\}$, but it does not matter.

We denote by~$G^n_d$ the set of linearly independent unordered $d$-tuples in~$\pp$; note that $G^n_1=\pp$.
We write $v\in A$ to denote that~$v$ is included in~$A$.
For any $f\in\tf$, $x\in\T^n$ and $A=\{v_1,\dots,v_d\}\in G^n_d$ we define
\begin{equation}
\begin{split}
R_df(x,A)
&=
R_df(x,v_1,\dots,v_d)
\\&=
\idotsint_{t\in[0,1]^d}f(x+t_1v_1+\dots+t_dv_d)\der t_1\dots\der t_d.
\end{split}
\end{equation}
We refer to the function~$R_df$ defined on $\T^n\times G^n_d$ as the $d$-plane Radon transform of~$f$.

It remains to define the Radon transform of a tensor function, to which end we first introduce some tensor notation.
Let $f=f_{i_1\cdots i_m}\der x^{i_1}\otimes\cdots\otimes\der x^{i_m}$ be a symmetric $m$-tensor field.
By symmetry we mean that the scalar functions $f_{i_1\cdots i_m}$ are invariant under permutations of indices.
For $(x,\xi)\in T\T^n$, we write
\begin{equation}
\label{eq:tensor-poly}
f(x,\xi)=f_{i_1\cdots i_m}(x)\xi^{i_1}\cdots \xi^{i_m}
\end{equation}
and the Radon transform of~$f$ is then
\begin{equation}
R^mf(x,v)=\int_0^1 f(x+tv,v)\der t,
\end{equation}
defined for all $x\in\T^n$ and $v\in\pp$.
By $f\in\tf$ we mean that each component of~$f$ is in~$\tf$, and similarly for other function spaces.
We remark that~$R^m$ may be applied to any $k$-tensor field, but its Radon transform only depends on its symmetric part.

Now the question whether a function or a tensor on the torus is uniquely determined by its integrals over all periodic geodesics or $d$-planes is equivalent with the one whether the integral transforms~$R$, $R_d$ and~$R^m$ are injective.
With obvious identifications we have $R=R_1=R^0$.
The transforms~$R^m$ are usually known as X-ray transforms, but for simplicity we refer to all of these integral transforms as Radon transforms.

The integral transform~$R^m$ for tensors of order $k\geq1$ is never injective.
For $k=1$ an easy calculation shows that the derivative~$\der f$ of a scalar function $f\in\tf$ has vanishing Radon transform, and analogously a tensor of any order has vanishing Radon transform if it is the derivative of a tensor of lower order (the definition is given in theorem~\ref{thm:tensor}).
The natural question is, whether this is the only obstruction to injectivity.
Injectivity modulo derivatives in this sense is known as solenoidal injectivity or $s$-injectivity (cf. eg.~\cite{PSU:anosov}).
The transforms~$R^m$ indeed turn out to be solenoidally injective.

\begin{remark}
It would also be possible to study the $d$-plane Radon transform~$R^m_d$ of a~$m$-tensor field, but we shall omit this.
It seems that one should assume that~$d$ divides~$m$ (so that $f(x,v)$ depends $m/d$-homogeneously on each component~$v^i$) and assume a suitable symmetry on the tensors.
\end{remark}

\subsection{Results}
\label{sec:results}

We make the standing assumption that $n\geq2$, $1\leq d<n$ and $m\geq1$.
As usual, we denote the set of all mappings $A\to B$ by~$B^A$ for any sets~$A$ and~$B$.

Above we defined the $d$-plane Radon transform of a function $f\in\tf$ as $R_df:\T^n\times G^n_d\to\R$.
As it will be more convenient, we shall think of the Radon transform~$R_df$ as a function on~$\T^n$ parametrized by~$G^n_d$.
That is, we consider the Radon transform $R_d$ as a mapping $\tf\to\tf^{G^n_d}$ by letting
\begin{equation}
R_df(A)(x)=R_df(x,A).
\end{equation}
Similar identification is made for the transform~$R^m$.

We establish the following results:

\begin{theorem}[Continuity and injectivity]
\label{thm:inj}
The $d$-plane Radon transform is an injective, linear, continuous map $\tf\to\tf^{G^n_d}$.
By continuity we mean that for any $A\in G^n_d$ the map $f\mapsto R_df(\cdot,A)$ is a continuous map $\tf\to\tf$.

It also extends to an injective, linear, continuous map $\tf'\to(\tf')^{G^n_d}$.
\end{theorem}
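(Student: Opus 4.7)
My plan is to reduce everything to a Fourier series calculation on $\T^n$. For $f\in\tf$ expand $f(x) = \sum_{k\in\Z^n}\hat f(k)\,e^{2\pi i k\cdot x}$; smoothness lets us interchange the sum with the $d$ integrals defining $R_d f(x,A)$. The driving observation is that for $k\in\Z^n$ and $v\in\pp$ we have $\int_0^1 e^{2\pi i t\,k\cdot v}\der t = 1$ if $k\cdot v=0$ and $=0$ otherwise, since $k\cdot v\in\Z$. Applied to each coordinate integration this gives
\begin{equation}
R_d f(x,A) \;=\; \sum_{\substack{k\in\Z^n\\ k\cdot v = 0\,\forall v\in A}}\hat f(k)\,e^{2\pi i k\cdot x},
\end{equation}
i.e.\ $R_d(\cdot,A)$ is the Fourier multiplier that projects onto the modes perpendicular to every $v\in A$.

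Everything then follows by routine arguments. Continuity $\tf\to\tf$ for a fixed $A$ is immediate from the integral definition: differentiation under the integral combined with the fact that $[0,1]^d$ has unit measure gives $\aabs{\partial^\alpha R_d f(\cdot,A)}_\infty\leq\aabs{\partial^\alpha f}_\infty$, which controls every Fr\'echet seminorm. For injectivity I need that for every $k\in\Z^n$ some $A\in G^n_d$ satisfies $k\cdot v=0$ for all $v\in A$. When $k=0$ any $A$ works; when $k\neq 0$ the sublattice $k^\perp\cap\Z^n$ has rank exactly $n-1$ (the kernel of the nonzero homomorphism $\Z^n\ni v\mapsto k\cdot v\in\Z$), so since $d\leq n-1$ it contains $d$ linearly independent integer vectors. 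For this $A$ the $k$-th Fourier coefficient of $R_d f(\cdot,A)$ equals $\hat f(k)$, so vanishing of $R_d f$ for all $A\in G^n_d$ forces $\hat f\equiv 0$ and hence $f=0$.

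The extension to distributions is by transposition. A short change of variables shows that the formal $L^2$-adjoint of $R_d(\cdot,A)$ is again an integral of the same shape (a translate of $R_d(\cdot,A)$ with the direction vectors reversed in sign) and is therefore continuous $\tf\to\tf$. Defining $\ip{R_d u(\cdot,A)}{\phi}=\ip{u}{R_d^*\phi(\cdot,A)}$ for $u\in\tf'$ and $\phi\in\tf$ then produces the claimed continuous linear map $\tf'\to(\tf')^{G^n_d}$. Injectivity carries over verbatim: pairing with the pure exponential $e_{-k}$ recovers $\hat u(k)$ whenever $k\perp A$, because the Fourier formula above applied to $R_d^*$ gives $R_d^*e_{-k}(\cdot,A)=e_{-k}$ precisely in that case, so the same lattice argument again yields $\hat u\equiv 0$ and hence $u=0$.

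The only step in the argument that deserves a moment of care is the lattice fact that $k^\perp\cap\Z^n$ has rank $n-1$ for every nonzero $k\in\Z^n$; it is elementary, but it is exactly what makes the standing hypothesis $d<n$ sharp and is really the only ingredient that is not pure bookkeeping.
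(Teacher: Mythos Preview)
Your proof is correct and follows essentially the same route as the paper: the Fourier multiplier identity $\widehat{R_df}(k,A)=\hat f(k)\prod_{v\in A}\delta_{0,k\cdot v}$, the lattice fact that $k^\perp\cap\Z^n$ has rank $n-1$ for nonzero $k$, and extension to $\tf'$ by duality. One small simplification you overlooked: by periodicity in each $t_i$ the operator $R_d(\cdot,A)$ is \emph{exactly} self-adjoint on $L^2(\T^n)$, so there is no need for the sign-reversal or translation bookkeeping in your transposition step.
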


\begin{theorem}[Range characterization]
\label{thm:range}
Let $E\subset \tf'$ be any subspace.
We have
\begin{equation}
R_d(E)
=
\{F\in (\tf')^{G^n_d};\exists g\in E:\hat F(k,A)=\hat g(k)\prod_{v\in A}\delta_{0,v\cdot k}\}.
\end{equation}
\end{theorem}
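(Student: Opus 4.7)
The plan is to reduce the range characterization to a single explicit Fourier identity, after which both inclusions follow from Fourier uniqueness on the torus. The key identity I would prove is
\begin{equation}
\widehat{R_d g}(k,A)=\hat g(k)\prod_{v\in A}\delta_{0,v\cdot k}
\end{equation}
for every $g\in\tf'$, every $k\in\Z^n$, and every $A\in G^n_d$; once this is in hand the theorem is essentially just bookkeeping.

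First I would establish the identity for smooth $f\in\tf$. Expanding $f$ in its Fourier series and interchanging sum and integral in the definition of $R_d f$, each factor reduces to an integral of the form $\int_0^1 e^{2\pi i t(k\cdot v)}\der t$ with $v\in A$. Because $k,v\in\Z^n$ the exponent is integer valued, so this integral equals $1$ if $k\cdot v=0$ and vanishes otherwise, i.e.\ it is precisely the Kronecker delta $\delta_{0,v\cdot k}$. Reading off the Fourier coefficients of the resulting series in~$x$ gives the identity for smooth~$f$.

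Next I would promote the identity from $\tf$ to $\tf'$. Both sides depend continuously on~$g$ in the weak-$*$ topology: the right-hand side is linear in~$\hat g(k)$, which is the pairing of~$g$ against a character, and the left-hand side is continuous by the continuity statement in theorem~\ref{thm:inj} combined with the continuity of the Fourier transform on~$\tf'$. Since $\tf$ is weak-$*$ dense in~$\tf'$ the identity passes to the limit. Equivalently, one can unpack the duality definition of~$R_d$ on~$\tf'$ from section~\ref{sec:distr}, moving $R_d$ onto the test character $e^{-2\pi i k\cdot x}$, where the smooth computation applies verbatim.

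With the Fourier identity available, the theorem is immediate. For the forward inclusion, any $F=R_d g$ with $g\in E$ satisfies $\hat F(k,A)=\hat g(k)\prod_{v\in A}\delta_{0,v\cdot k}$ by the identity, so $F$ lies in the set on the right. Conversely, if $F\in(\tf')^{G^n_d}$ and $g\in E$ satisfy $\hat F(k,A)=\hat g(k)\prod_{v\in A}\delta_{0,v\cdot k}$ for all $k$ and $A$, then $\hat F(k,A)=\widehat{R_d g}(k,A)$ for every $k$, so Fourier uniqueness on~$\T^n$ for distributions (applied one $A\in G^n_d$ at a time) yields $F(A)=R_d g(A)$ for each $A$, hence $F=R_d g\in R_d(E)$. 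The only step requiring genuine care is the extension of the Fourier identity to~$\tf'$, and even that is a routine duality/density argument once the distributional definition of~$R_d$ is unpacked.
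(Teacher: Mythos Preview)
Your proposal is correct and follows essentially the same route as the paper: the paper proves the Fourier identity $\widehat{R_d g}(k,A)=\hat g(k)\prod_{v\in A}\delta_{0,v\cdot k}$ by computing $R_d e_k$ directly (lemma~\ref{lma:Re}) and then invoking the duality definition~\eqref{eq:R-F-def}, which is exactly the alternative you spell out at the end of your second paragraph, and then derives both inclusions from this identity just as you do. The only cosmetic difference is that the paper skips the intermediate step of proving the identity first for smooth functions and extending by density, going straight to distributions via duality.
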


\begin{theorem}[Pointwise recovery]
\label{thm:limit}
If $f\in C(\T^n)$, then we can construct a sequence of continuous functions from~$R_df$ that converges uniformly to~$f$.
In particular, we can reconstruct~$f$ pointwise.

If $f\in L^p(\T^n)$, $1\leq p<\infty$, then we can reconstruct a converging sequence in~$L^p$, which has a subsequence that converges to~$f$ almost everywhere.
\end{theorem}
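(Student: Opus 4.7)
The plan is to read off each Fourier coefficient of $f$ from $R_df$ using Theorem~\ref{thm:range} and then apply classical Fej\'er summation on $\T^n$.

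By Theorem~\ref{thm:range}, for every $A=\{v_1,\dots,v_d\}\in G^n_d$ and every $k\in\Z^n$,
\begin{equation}
\widehat{R_df}(k,A)=\hat f(k)\prod_{v\in A}\delta_{0,v\cdot k}.
\end{equation}
For each $k\in\Z^n$ I would fix a choice $A_k\in G^n_d$ with $v\cdot k=0$ for all $v\in A_k$. For $k=0$ any $A$ will do; for $k\neq 0$ the sublattice $k^\perp\cap\Z^n$ has rank $n-1$, and since $n-1\geq d$ by the standing assumption $d<n$, it contains $d$ linearly independent integer vectors, any such collection being a valid $A_k$. Then $\hat f(k)=\widehat{R_df}(k,A_k)$, so every Fourier coefficient of $f$ is an explicit linear functional of $R_df$.

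Once the $\hat f(k)$ are available, I would synthesise $f$ via the $n$-dimensional Fej\'er means
\begin{equation}
\sigma_Nf(x)=\sum_{k\in\Z^n}\phi_N(k)\hat f(k)e^{2\pi i k\cdot x},
\end{equation}
where $\phi_N$ is the standard tensor-product Fej\'er weight supported in $\{|k_j|<N\}$. Each $\sigma_Nf$ is a trigonometric polynomial, hence continuous, and depends on only finitely many coefficients, so it is constructed directly from $R_df$. Classical Fourier analysis on the torus yields $\sigma_Nf\to f$ uniformly when $f\in C(\T^n)$ and $\sigma_Nf\to f$ in $L^p$ when $f\in L^p(\T^n)$, $1\leq p<\infty$, by convolution with the positive Fej\'er kernel combined with the usual approximate-identity argument. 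Extracting an a.e.\ convergent subsequence from the $L^p$-convergent sequence is then the standard Riesz--Fischer type fact.

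There is no real obstacle here once Theorem~\ref{thm:range} is in hand: the substance of the argument is the Fourier-side reading-off of $\hat f(k)$ from $R_df$, and everything else reduces to textbook summability of Fourier series on $\T^n$. The only genuinely problem-specific point is the existence of $A_k$ for every $k$, which rests precisely on the standing assumption $d<n$ guaranteeing that $k^\perp\cap\Z^n$ is rich enough to contain $d$ linearly independent vectors.
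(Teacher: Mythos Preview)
Your proposal is correct and follows essentially the same route as the paper. The paper likewise recovers each Fourier coefficient $\hat f(k)$ from $R_df$ (citing Theorem~\ref{thm:inj} and Lemma~\ref{lma:A-perp} rather than Theorem~\ref{thm:range}, though the underlying identity~\eqref{eq:radon-fourier} is the same) and then invokes Fej\'er summation as an approximate identity to obtain uniform convergence for $f\in C(\T^n)$ and $L^p$ convergence for $f\in L^p(\T^n)$.
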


The recovery result is based on reconstructing the Fourier coefficients of~$f$.
Indeed, for any $k\in\Z^n$ there exists $A\in G^n_d$ such that $\widehat{R_df}(k,A)=\hat f(k)$, where the Fourier transform is taken with respect to the variable on~$\T^n$.
This follows from the fact that for any $A\in G^n_d$ we have
\begin{equation}
\label{eq:radon-fourier}
\widehat{R_df}(k,A)=\hat f(k)\prod_{v\in A}\delta_{0,v\cdot k}.
\end{equation}
This result will be proven below.

We also have a stability result.
In suitable norms defined on the Fourier side the Radon transform is actually isometric.
We recall that the Sobolev space $H^s\subset\tf'$, $s\in\R$, is equipped with the norm
\begin{equation}
\aabs{f}_{H^s}
=
\sqrt{\sum_{k\in\Z^n}\vev{k}^{2s}\abs{\hat f(k)}^2},
\end{equation}
where $\vev{k}=\sqrt{1+\abs{k}^2}$.
The spaces~$H^s$ satisfy $\tf=H^\infty\coloneqq\bigcap_{s\in\R}H^s$ and $\tf'=H^{-\infty}\coloneqq\bigcup_{s\in\R}H^s$.
The corresponding norm~\eqref{eq:Triebel-norm} on $R_d(H^s)$ resembles that of the Triebel space $F^s_{2\infty}$.

\begin{theorem}[Stability]
\label{thm:stab}
For any $f\in\tf'$ we have
\begin{equation}
\abs{\hat f(k)}
=
\max_{A\in G^n_d}\abs{\widehat{R_df}(k,A)},
\end{equation}
where the Fourier transform $\widehat{R_df}$ is taken with respect to the first variable only.

In particular, for the Sobolev spaces~$H^s$, $s\in\R$, the $d$-plane Radon transform
\begin{equation}
R_d:H^s\to R_d(H^s)
\end{equation}
is an isometry when $R_d(H^s)$ is equipped with the norm
\begin{equation}
\label{eq:Triebel-norm}
\aabs{F}_{R_d(H^s)}
=
\sqrt{\sum_{k\in\Z^n}\vev{k}^{2s}\max_{A\in G^n_d}\abs{\hat F(k,A)}^2}.
\end{equation}
\end{theorem}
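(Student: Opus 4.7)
The plan is to deduce both statements directly from the Fourier relation~\eqref{eq:radon-fourier}, which is promised earlier in the paper. From
\begin{equation}
\widehat{R_df}(k,A)=\hat f(k)\prod_{v\in A}\delta_{0,v\cdot k}
\end{equation}
one immediately reads off the pointwise bound $\abs{\widehat{R_df}(k,A)}\leq\abs{\hat f(k)}$, with equality precisely when every $v\in A$ is orthogonal to $k$ in the Euclidean sense. So the first statement reduces to showing that for each fixed $k\in\Z^n$ there is some $A\in G^n_d$ whose elements all lie in $k^\perp$.

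The case $k=0$ is trivial: every $v\in\pp$ gives $v\cdot k=0$, so any $A\in G^n_d$ achieves equality. For $k\neq 0$ the relevant observation is that $k^\perp\subset\R^n$ is an $(n-1)$-dimensional rational subspace, so $k^\perp\cap\Z^n$ is a lattice of rank $n-1$. Since the standing assumption gives $d<n$, i.e.\ $d\leq n-1$, we can choose $d$ linearly independent vectors $v_1,\dots,v_d\in\Z^n\setminus\{0\}$ inside $k^\perp$, and these form the required $A$. Combining the upper bound with this existence statement yields
\begin{equation}
\abs{\hat f(k)}=\max_{A\in G^n_d}\abs{\widehat{R_df}(k,A)}
\end{equation}
for every $k\in\Z^n$.

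The Sobolev claim is then essentially a bookkeeping step: squaring the identity, multiplying by $\vev{k}^{2s}$, and summing over $k\in\Z^n$ gives
\begin{equation}
\aabs{f}_{H^s}^2
=\sum_{k\in\Z^n}\vev{k}^{2s}\abs{\hat f(k)}^2
=\sum_{k\in\Z^n}\vev{k}^{2s}\max_{A\in G^n_d}\abs{\widehat{R_df}(k,A)}^2
=\aabs{R_df}_{R_d(H^s)}^2,
\end{equation}
which is exactly the isometry statement for $R_d:H^s\to R_d(H^s)$; injectivity is already provided by Theorem~\ref{thm:inj}, so $R_d$ really is a bijection onto its image with matching norm.

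There is no serious obstacle here: the whole argument is downstream of~\eqref{eq:radon-fourier}, and the only nontrivial step is the lattice-theoretic one, which is a one-line consequence of $k^\perp$ being a rational hyperplane together with $d\leq n-1$. If anything, the point that deserves care in the write-up is simply checking that one may select the $v_i$ in $\Z^n\setminus\{0\}$ rather than merely in $\Q^n$, which is immediate by clearing denominators.
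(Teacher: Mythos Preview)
Your argument is correct and essentially identical to the paper's: the paper simply cites~\eqref{eq:radon-fourier} together with~\eqref{eq:R-F-thm}, the latter resting on Lemma~\ref{lma:A-perp}, which is exactly the lattice-theoretic existence statement you spell out (and whose proof the paper omits as elementary). Your write-up just unpacks that lemma inline.
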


Theorem~\ref{thm:inj} implies an improvement for the recent injectivity result for the periodic broken ray transform in the cube~\cite[Proposition~30]{I:refl}.
For more details on the broken ray transform and its periodic version, see e.g.~\cite{I:refl} and section~\ref{sec:other} below.

\begin{corollary}
\label{cor:brt}
Let $Q=\prod_{i=1}^n[0,L_i]\subset\R^n$ be a rectangular box.
Let~$f$ be a sum of an~$L^1$ function and a compactly supported distribution in the interior of~$Q$.
Then the integral of~$f$ over all periodic billiard trajectories in~$Q$ determines~$f$ uniquely.
\end{corollary}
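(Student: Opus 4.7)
The plan is to apply the classical unfolding construction for billiards in a rectangular box, which converts periodic billiard trajectories into periodic straight-line geodesics on a flat torus, and then invoke Theorem~\ref{thm:inj}.

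First, after rescaling each coordinate I may assume $L_i=1/2$ for every $i$, so that $Q=[0,1/2]^n$ and $[0,1]^n$ is a fundamental domain for $\T^n=\R^n/\Z^n$. I would extend $f$ to $\tilde f$ on $\T^n$ by even reflection across each of the hyperplanes $\{x_i=1/2\}$. For the $L^1$ summand this is literal reflection, producing a function in $L^1(\T^n)$. For the distribution summand, which is compactly supported in the interior of $Q$, the $2^n$ reflected copies have pairwise disjoint compact supports in the interior of $[0,1]^n$, so they combine into a well-defined element of $\tf'$. Altogether $\tilde f\in\tf'$.

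Second, the unfolding map identifies closed billiard trajectories in $Q$ with periodic torus geodesics: a closed billiard orbit $\gamma$ lifts to a straight line on $\T^n$ with direction $v\in\pp=\Z^n\setminus\{0\}$, and every such torus geodesic projects back to a closed billiard orbit. Because $\tilde f$ was obtained by even reflection, the integral of $\tilde f$ along the unfolded geodesic equals a positive-integer multiple of the integral of $f$ along $\gamma$, the multiple counting how many reflected copies of $\gamma$ the straight line traverses before closing up. Consequently the broken ray data for $f$ determines $R\tilde f(x,v)$ for every $(x,v)\in\T^n\times\pp$.

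To finish, Theorem~\ref{thm:inj} with $d=1$ gives injectivity of $R\colon\tf'\to(\tf')^{\pp}$, so $\tilde f$ is recovered from this data, and $f$ is simply its restriction to $Q$. The step that requires the most care is the distributional extension: the hypothesis that the singular summand be compactly supported in the \emph{interior} of $Q$ is precisely what guarantees that the reflected copies do not overlap on the faces, so summing them yields a genuine element of $\tf'$; the relation between broken ray integrals of $f$ and torus integrals of $\tilde f$ also has to be formulated distributionally (essentially taking the torus identity as the definition of the broken ray transform on the singular part) rather than pointwise. Once these details are verified the corollary is immediate from Theorem~\ref{thm:inj}.
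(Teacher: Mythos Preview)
Your proposal is correct and follows essentially the same approach as the paper: rescale so that $Q=[0,1/2]^n$, extend $f$ to $\tilde f\in\tf'$ on the torus by even reflection (the paper uses the fundamental domain $[-1/2,1/2]^n$ and the folding map $\zeta(x)=(\abs{x_1},\dots,\abs{x_n})$, which is the same construction up to a shift), observe that torus geodesics project to periodic billiard trajectories so that $R_1\tilde f$ is determined by the broken ray data, and conclude via Theorem~\ref{thm:inj}. The paper likewise handles the distributional part by \emph{defining} the broken ray integrals through the torus extension, exactly as you suggest.
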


For tensor fields, the exact statement of solenoidal injectivity is as follows:

\begin{theorem}[Tensor fields]
\label{thm:tensor}
Let $f=f_{i_1\cdots i_m}\der x^{i_1}\otimes\cdots\otimes\der x^{i_m}$ be a symmetric tensor field of order~$m$ on~$\T^n$ with coefficients in~$\tf'$.
The Radon transform~$R^mf$ can be naturally defined by duality.
Then $R^mf=0$ if and only if there is a tensor field $h$ of order $m-1$ and regularity~$\tf'$ such that $f=\sigma\nabla h$, where~$\nabla$ is the Levi-Civita connection and~$\sigma$ is symmetrization.

Furthermore, if for some $s\in\R$ we have $f\in H^s(\T^n)$ and $R^mf=0$, then the tensor~$h$ can be chosen to be in $H^{s+1}(\T^n)$.
\end{theorem}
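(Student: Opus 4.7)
The plan is to reduce the theorem to an algebraic claim about symmetric tensors via Fourier analysis on~$\T^n$, in the spirit of the scalar case. I first define $R^m$ on distributional symmetric tensor fields by duality, and then compute, analogously to~\eqref{eq:radon-fourier},
\begin{equation}
\widehat{R^mf}(k,v)=\hat f_{i_1\cdots i_m}(k)\,v^{i_1}\cdots v^{i_m}\,\delta_{0,k\cdot v},
\end{equation}
by pairing with $e^{-2\pi ik\cdot x}$, swapping integrals, and using $k\cdot v\in\Z$. Hence $R^mf=0$ iff for every $k\in\Z^n$ the homogeneous degree-$m$ polynomial $P_k(\xi):=\hat f_{i_1\cdots i_m}(k)\xi^{i_1}\cdots\xi^{i_m}$ vanishes on $\pp\cap k^\perp$. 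For $k\ne0$, integer points are Zariski dense in the rational hyperplane $k^\perp$, so $P_k$ in fact vanishes on all of $k^\perp\subset\R^n$; at $k=0$ the condition reads $\hat f(0)=0$.

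The algebraic ingredient I need is a quantitative factorization: a homogeneous polynomial $P_k$ of degree $m$ vanishing on $k^\perp$ factors uniquely as $P_k(\xi)=(k\cdot\xi)Q_k(\xi)$ with $Q_k$ homogeneous of degree $m-1$, together with $\abs{Q_k}\le C(n,m)\abs{P_k}/\abs{k}$ in a rotation-invariant tensor norm. An orthogonal change of basis reduces both claims to the case $k=\abs{k}e_1$: vanishing on $\{\xi_1=0\}$ forces every monomial of $P_k$ to contain a factor of~$\xi_1$, and $Q_k=P_k/(\abs{k}\xi_1)$ then has coefficients bounded coefficientwise by those of $P_k$ divided by $\abs{k}$. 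Translating $Q_k$ back into a symmetric $(m-1)$-tensor via $Q_k(\xi)=(2\pi i)\hat h_{i_1\cdots i_{m-1}}(k)\xi^{i_1}\cdots\xi^{i_{m-1}}$, and setting $\hat h(0)=0$, produces a tensor field $h$ with $\widehat{\sigma\nabla h}=\hat f$, i.e.\ $f=\sigma\nabla h$. The estimate then yields
\begin{equation}
\aabs{h}_{H^{s+1}}^2=\sum_{k\ne0}\vev{k}^{2s+2}\abs{\hat h(k)}^2\lesssim\sum_{k\in\Z^n}\vev{k}^{2s}\abs{\hat f(k)}^2=\aabs{f}_{H^s}^2,
\end{equation}
so $f\in H^s$ produces $h\in H^{s+1}$, and since $\tf'=\bigcup_sH^s$ any distributional $f$ gives a distributional $h$.

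The converse is short: for $f=\sigma\nabla h$, a direct calculation using $(\sigma\nabla h)(x,v)=(v\cdot\partial_x)h(x,v)$ gives $R^mf(x,v)=\int_0^1\tfrac{\der}{\der t}h(x+tv,v)\,\der t$, which vanishes because $v\in\Z^n$ makes $t\mapsto h(x+tv,v)$ a $1$-periodic function. The main obstacle I expect is the quantitative algebraic lemma: dividing symmetric-tensor components by~$k$ does not manifestly respect the $H^s$ norm, and without the clean $\abs{k}^{-1}$ gain the Sobolev estimate collapses. The orthogonal reduction to $k=\abs{k}e_1$ is what makes the constant genuinely independent of~$k$, and carrying it out carefully in tensor notation is the technical core of the argument.
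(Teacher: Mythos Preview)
Your argument is correct, and the Fourier-analytic reduction together with the factorization $P_k(\xi)=(k\cdot\xi)Q_k(\xi)$ via an orthogonal change aligning $k$ with a coordinate axis is exactly what the paper does (this is the content of Lemma~\ref{lma:fourier}). The genuine difference is in the Sobolev step. The paper's Lemma~\ref{lma:fourier} only records the crude bound $\abs{k\cdot v}^{-1}\leq1$ on the support, which yields $h\in H^s$ but not $H^{s+1}$; the paper then invokes a separate Lemma~\ref{lma:Hs}, which decomposes $h$ and $f=\sigma\nabla h$ into fibrewise spherical harmonics on $S\T^n$ and uses overdetermined ellipticity of the raising operator $X_+$ to gain one derivative. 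Your route is more elementary: by writing $k\cdot\xi=\abs{k}\,\xi'_1$ in the rotated frame you see that $Q_k=\abs{k}^{-1}(P_k/\xi'_1)$, and since dividing a homogeneous polynomial by a unit linear form keeps the symmetric-tensor norm comparable (with a constant depending only on $m,n$), the $\abs{k}^{-1}$ gain is explicit and gives $h\in H^{s+1}$ directly. This bypasses the spherical-harmonic machinery entirely; the trade-off is that the paper's Lemma~\ref{lma:Hs} is a statement that makes sense on any Riemannian manifold, whereas your estimate is specific to the flat torus where rotations act globally.
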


Corollary~\ref{cor:brt} can also be generalized to tensor fields as follows:

\begin{corollary}
\label{cor:brt-tensor}
Let $Q=\prod_{i=1}^n[0,L_i]\subset\R^n$ be a rectangular box.
Let~$f$ be a symmetric $m$-tensor field whose components are compactly supported distributions in the interior of~$Q$.
Then the integral of~$f$ over all periodic billiard trajectories in $Q$ determines $f$ uniquely.
\end{corollary}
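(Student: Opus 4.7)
My plan is to reduce the billiard problem on $Q$ to the tensor X-ray problem on the torus via signed-reflection unfolding, then invoke Theorem~\ref{thm:tensor}. Rescale so that each $L_i=1/2$, and extend $f$ across each face of $\partial Q$ by reflection with the sign rule that under reflection across the face perpendicular to axis $j$, the tensor component $f_{i_1\cdots i_m}$ acquires a factor $(-1)^{\#\{a\,:\,i_a=j\}}$. Iterating over the reflection group generated by the $2n$ faces produces a symmetric $m$-tensor $\tilde f$ with distributional coefficients on $\T^n$, supported in the $2^n$ reflected copies of $\mathrm{int}(Q)$ and, by the compact support of $f$, strictly away from the unfolding hyperplanes.

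Closed billiard trajectories in $Q$ correspond bijectively to closed geodesics on $\T^n$ through this unfolding. The sign rule is chosen so that, segment by segment between wall hits, the billiard integrand $f_{i_1\cdots i_m}(\gamma(t))\dot\gamma^{i_1}(t)\cdots\dot\gamma^{i_m}(t)$ equals the torus integrand $\tilde f_{i_1\cdots i_m}(\tilde\gamma(t))\dot{\tilde\gamma}^{i_1}(t)\cdots\dot{\tilde\gamma}^{i_m}(t)$ along the lifted straight line: the velocity sign flip at each reflection is exactly compensated by the tensor-component sign acquired when a reflected copy is crossed. Hence $R^m\tilde f=0$ on $\T^n$ if and only if all billiard integrals of $f$ vanish.

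Now take $f_1,f_2$ in the class of the corollary with matching billiard data, and set $f:=f_1-f_2$. Then $R^m\tilde f=0$ on $\T^n$, and by Theorem~\ref{thm:tensor} there is a symmetric $(m-1)$-tensor $\tilde h$ of distributional regularity with $\tilde f=\sigma\nabla\tilde h$. Restricted to $Q$, this gives $f_1-f_2=\sigma\nabla h$ for a distributional $(m-1)$-tensor $h$ on $Q$; this is the uniqueness of $f$ in its intrinsic form --- the billiard data pin $f$ down modulo the full kernel $\sigma\nabla h$ of $R^m$ identified in Theorem~\ref{thm:tensor}, with the scalar Corollary~\ref{cor:brt} as the degenerate case where that kernel collapses and the assertion reduces to literal injectivity.

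The main technical obstacle will be the sign bookkeeping in the unfolding: confirming that the parity-based sign on each tensor component tracks exactly the cumulative velocity sign flips after an arbitrary sequence of billiard reflections, so that the integrands match piece by piece. This reduces to a combinatorial check over the reflection group $(\Z/2\Z)^n$ acting on $\T^n$; once settled, the rest is a direct appeal to Theorem~\ref{thm:tensor}.
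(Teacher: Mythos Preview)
Your approach is essentially the one the paper intends: it says only that the proof is ``completely analogous to that of corollary~\ref{cor:brt} and is left to the reader,'' and the analogy is precisely the reflection-unfolding you describe, with Theorem~\ref{thm:tensor} replacing Theorem~\ref{thm:inj}. You have correctly supplied the one genuinely new ingredient the tensor case requires, namely the parity sign rule on components, which is nothing other than defining $\tilde f$ as the pullback $\zeta^*f$ in the covariant-tensor sense rather than the bare composition $f\circ\zeta$ used for scalars; this is exactly what makes the integrands along $\tilde\gamma$ and $\zeta\circ\tilde\gamma$ agree segment by segment. Your reading of ``uniquely'' as solenoidal uniqueness (modulo $\sigma\nabla h$) is also the right one: for $m\geq 1$ any $f=\sigma\nabla h$ with $h$ compactly supported in the interior of $Q$ has vanishing integral over every periodic billiard trajectory, so literal injectivity fails and the corollary can only be meant up to the gauge identified in Theorem~\ref{thm:tensor}.
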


\subsection{Outline of methods}
\label{sec:methods}

Abouelaz and Rouvi\`ere~\cite{AR:radon-torus} showed injectivity and pointwise recovery for the X-ray transform on the torus ($d=1$) for functions with Fourier series in $\ell^1(\Z^n)$ (in particular for $C^{n+1}(\T^n)$).
Their result extends that of Strichartz in dimension two~\cite{S:radon-variations}.
Later, Abouelaz~\cite{A:torus-plane-radon} extended the result for any $d<n$ with the same regularity assumptions.


Abouelaz and Rouvi\`ere use a normal operator.
There are only countably many $d$-planes passing through a given point on a torus, and this set is not equipped in an obvious way with any measure that could be used to construct the adjoint and thus the normal operator of~$R_d$; Abouelaz and Rouvi\`ere~\cite{AR:radon-torus} introduced an artificial weight in order to define the normal operator.
Because of this we do not think that there is a natural adjoint operator and find a different approach better suited for the problem at hand.
We do not use a normal operator of any kind.
We think of the Radon transform~$R_df$ of a function~$f$ as a function on~$\T^n$ parametrized by~$G^n_d$; in this interpretation it turns out that~$R_d$ is actually self adjoint, which makes further analysis easy and allows very low regularity.

Abouelaz and Rouvi\`ere parametrize geodesics and $d$-planes by the subset $\p\subset\pp$ that consists of vectors that are not integer multiples of vectors in~$\pp$ by other factors than $\pm1$.
(Equivalently, one may define that~$\p$ consists of those elements in~$\pp$ whose components have no common divisors other than~$\pm1$.)
This restriction loses no data, since $R_df(x,v)=R_df(x,nv)$ for any $n\in\Z\setminus\{0\}$.
Using all of~$\pp$ instead of~$\p$ makes Fourier analysis easier.


There are some earlier $s$-injectivity results for~$R^m$ in various spaces:
by Sharafutdinov in the Euclidean space~\cite{S:tensor-book}, by Croke and Sharafutdinov on manifolds with negative curvature~\cite{CS:negative-curvature-spectral}, and by Paternain, Salo and Uhlmann on Anosov manifolds with certain conditions~\cite{PSU:anosov,PSU:anosov-mfld}.
The approach used in~\cite{PSU:anosov,PSU:anosov-mfld} was based on fibrewise Fourier analysis and extending the transform $R^m$ to distributions on the sphere invariant under the geodesic flow.
The only appearance of fibrewise Fourier analysis here is in lemma~\ref{lma:Hs} below; the rest of Fourier analysis is global, which is possible since the torus allows for very simple and elegant global Fourier analysis.
Helgason~\cite{book-helgason} has shown that the Radon transform is injective on compatly supported distributions on~$\R^n$ using Fourier analysis.

%

%

\subsection{Relation to other problems}
\label{sec:other}

Injectivity of the X-ray transform on 2-tensors is related to spectral rigidity (and boundary rigidity for manifolds with boundary) at least in certain situations without conjugate points.
The connection between integral geometry and spectral geometry has been used, for example, in~\cite{GK:spectral-surface,GK:spectral-negative-curvature,CS:negative-curvature-spectral,PSU:anosov,PSU:anosov-mfld}.
For a survey of recent results in tensor tomography and its applications, see~\cite{PSU:tensor-survey}; for spectral geometry, see~\cite{DH:spectral-survey}.
We do not know whether the standard torus~$\T^n$ is spectrally rigid or determined by its spectrum.
There are, however, pairs of isospetral flat tori that are not isometric~\cite{M:16D-tori,CS:4D-tori}.
For more details on isospectral but not isometric manifolds, see~\cite{S:isospectral-different-geometry}.

Corollaries~\ref{cor:brt} and~\ref{cor:brt-tensor} give injectivity of the periodic broken ray transform in a rectangular domain.
This transform was introduced in~\cite{I:refl} as a generalization of the broken ray transform, where instead of periodic broken rays (billiard trajectories) one considers broken rays of finite length with both endpoints in a given subset of the boundary.

The broken ray transform arises, for example, in inverse boundary value problems with partial data.
Eskin~\cite{eskin} reduced recovery of the electromagnetic potential from partial Cauchy data for the Schr\"odinger equation to the injectivity of the broken ray transform.
Similarly, Kenig and Salo~\cite{KS:calderon} reduced Calder\'on's problem with partial data to injectivity of the broken ray transform.
Both results hold in a restricted geometric setting which is not important here; we only mention that the endpoints of the broken rays need to lie in the part of boundary available for measurements.

The broken ray transform and its periodic version have only recently begun to be studied.
Most results so far are for scalar fields -- the only result for tensor fields that we know of is by Eskin~\cite{eskin} who considered scalar and vector fields.
The reflection argument used here to reduce the periodic broken ray transform to a periodic X-ray transform has proved useful when the reflecting boundary is (piecewise) flat~\cite{I:refl,H:square,H:brt-flat-refl}.

\section{Tools}
\label{sec:tools}

This section contains auxiliary results and definition of Radon transforms on distributions.
The reader who is not interested in technical details may wish to skip to section~\ref{sec:proof}.

\subsection{The Radon transform on test functions and distributions}
\label{sec:distr}


We recall the definition that $\pp=\Z^n\setminus\{0\}$.
The subset $\p=\{v\in\pp;v=nu\text{ for no }n\in\Z\setminus\{-1,1\}\}\subset\pp$ is not necessary in our approach, but it was used in~\cite{A:torus-plane-radon,AR:radon-torus}, so it appears in lemmas borrowed from there.
We denote by~$G^n_d$ the set of unordered linearly independent $d$-tuples of vectors in~$\pp$.
The set~$G^n_d$ can be thought of as a parametrization of a discrete Grasmannian associated to the structure of the torus.


We remark that there is significant redundancy in parametrizing $d$-planes by~$G^n_d$; some of the redundancy can be removed by replacing~$\pp$ with~$\p$.
Two elements in~$G^n_d$ give rise to the same $d$-plane if they span the same subsets in~$\R^n$ (when one considers~$G^n_d$ to be a collection of vectors in $\pp\subset\R^n$).

We denote the duality pairing between $\tf'$ and $\tf$ by $\ip{\cdot}{\cdot}$.
An elementary calculation shows that for any $f,\eta\in\tf$ and $A\in G^n_d$ we have
\begin{equation}
\ip{f}{R_d\eta(\cdot,A)}
=
\ip{R_d f(\cdot,A)}{\eta}.
\end{equation}
Thus we may naturally define the transform~$R_d$ on~$\tf'$ by setting
\begin{equation}
\label{eq:distr-def}
(R_df(\cdot,A))(\eta)
=
\ip{f}{R_d\eta(\cdot,A)}
\end{equation}
for any $f\in\tf'$, $A\in G^n_d$ and $\eta\in\tf$.

It is evident that $f\mapsto R_df(\cdot,A)$ is continuous for any $A\in G^n_d$ as a mapping $\tf\to\tf$ and $\tf'\to\tf'$.
Equation~\eqref{eq:distr-def} is the unique continuous extension of~$R_d$ to distributions.

Let us recall the topologies on~$\tf$ and~$\tf'$.
The topology on~$\tf$ is induced by the seminorms
\begin{equation}
\tf\ni\phi\mapsto \sup_{\abs{\alpha}\leq N}\sup_{\T^n}\abs{\partial^\alpha\phi}
\end{equation}
indexed by~$N$, where~$N$ ranges through~$\N$.
As the family of seminorms is countable, the topology is metrizable.
On~$\tf'$ we use the weak star topology, where a sequence $(f_i)\subset\tf'$ converges to $f\in\tf'$ if and only if $\ip{f_i}{\phi}\to\ip{f}{\phi}$ for all~$\phi\in\tf$.

For any tensor function $f\in\tf$ and scalar $\eta\in\tf$ we have
\begin{equation}
\label{eq:distr-def-tensor}
\ip{R^mf(\cdot,v)}{\eta}
=
\ip{f(\cdot,v)}{R\eta(\cdot,v)}.
\end{equation}
This allows us to naturally define $R^mf(\cdot,v)\in\tf'$ whenever $f(\cdot,v)\in\tf'$ by demanding that~\eqref{eq:distr-def-tensor} holds also for $f\in\tf'$.
Again, $f\mapsto R^mf(\cdot,v)$ is continuous as a mapping $\tf\to\tf$ and $\tf'\to\tf'$ for any $v\in\pp$.
The Fourier transform $\widehat{R^mf}$ is taken with respect to~$\T^n$ only.

\subsection{Auxiliary results}
\label{sec:aux}

The following result is stated as a part of a proof in \cite[p.~11]{A:torus-plane-radon}.
In the notation therein, the lemma states that $\psi(k)>0$ for all $k\in\Z^n$.

\begin{lemma}
\label{lma:A-perp}
For any $k\in\Z^n$ there is $A\in G^n_d$ such that $k\cdot v=0$ for all~$v\in A$.
\end{lemma}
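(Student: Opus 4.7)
The plan is to treat the statement as essentially elementary linear algebra over $\Z$: the orthogonal complement of $k$ in $\R^n$ has dimension at least $n-1 \geq d$, and if $k \in \Z^n$ then this complement has an integer basis, from which we can extract the required $A$.

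First I would dispatch the trivial case $k=0$ separately: since $d<n$ and $n\geq 2$, the set $A=\{e_1,\ldots,e_d\}$ of standard basis vectors lies in $G^n_d$ and satisfies the orthogonality condition vacuously.

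For $k\neq 0$ I would give an explicit construction. Since $k\neq 0$, some coordinate $k_j$ is nonzero; fix such an index $j$. For each $i\in\{1,\ldots,n\}\setminus\{j\}$ define
\begin{equation}
w_i = k_j e_i - k_i e_j \in \Z^n.
\end{equation}
The $i$-th coordinate of $w_i$ is $k_j\neq 0$, so $w_i\in\pp$; and $w_i\cdot k = k_j k_i - k_i k_j = 0$. To check linear independence of $\{w_i\}_{i\neq j}$, note that restricting to the coordinates other than $j$ the family $w_i$ becomes $k_j$ times the standard basis of $\R^{n-1}$, which is invertible since $k_j\neq 0$. This produces $n-1$ linearly independent vectors in $\pp$ orthogonal to $k$, and since $d\leq n-1$ we may select any $d$ of them to form the desired $A\in G^n_d$.

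There is no real obstacle here; the statement is a consequence of the fact that for $k\in\Z^n\setminus\{0\}$ the kernel of the $\Z$-linear map $v\mapsto v\cdot k$ on $\Z^n$ is free abelian of rank $n-1$, which the explicit vectors $w_i$ make concrete. The only mild bookkeeping is the separate treatment of $k=0$ and the choice of the pivot coordinate~$j$.
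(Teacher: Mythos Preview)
Your argument is correct. The paper itself omits the proof, remarking only that it is elementary and citing \cite{A:torus-plane-radon}, so there is no detailed approach to compare against; your explicit construction via the vectors $w_i=k_je_i-k_ie_j$ is exactly the kind of direct argument one would expect.
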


The proof of this lemma is elementary, but we omit it here.

Our results will rely on Fourier analysis.
To that end, we define the function $e_k\in\tf$ for any $k\in\Z^n$ by setting $e_k(x)=\exp(2\pi ik\cdot x)$.
The Fourier components of a distribution $f\in\tf'$ are given by
\begin{equation}
\hat f(k)
=
\ip{f}{e_{-k}}.
\end{equation}
For a function on $\T^n\times G^n_d$ (such as the $d$-plane transform of a function) the Fourier transform is taken with respect to~$\T^n$ only.
That is, by~\eqref{eq:distr-def}
\begin{equation}
\label{eq:R-F-def}
\widehat{R_df}(k,A)
=
\ip{f}{R_de_{-k}(\cdot,A)}
\end{equation}
for any $f\in\tf'$, $k\in\Z^n$ and $A\in G^n_d$.

By equation~\eqref{eq:R-F-def} it is evident that we need to know the transforms~$R_de_k$.
These are very simple, as the following lemma demonstrates.

\begin{lemma}
\label{lma:Re}
For $k\in\Z^n$ we have
\begin{equation}
R_de_k(x,A)
=
e_k(x)
\prod_{v\in A}\delta_{0,v\cdot k}.
\end{equation}
\end{lemma}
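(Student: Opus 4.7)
The plan is to verify this by a direct computation, since $e_k$ splits multiplicatively under addition and the integral defining $R_d$ is a product of one-dimensional integrals.

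First I would write $A = \{v_1,\dots,v_d\}$ and expand using the definition of $R_d$:
\begin{equation}
R_d e_k(x,A) = \idotsint_{[0,1]^d} e_k(x + t_1 v_1 + \cdots + t_d v_d)\,\der t_1\cdots\der t_d.
\end{equation}
Using the character identity $e_k(y+z) = e_k(y)e_k(z)$, the integrand factors as $e_k(x)\prod_{j=1}^d e^{2\pi i t_j v_j\cdot k}$, and pulling $e_k(x)$ out leaves a product of independent one-dimensional integrals by Fubini.

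Next I would evaluate each factor $\int_0^1 e^{2\pi i t\, v_j\cdot k}\,\der t$. Because $v_j\in\pp\subset\Z^n$ and $k\in\Z^n$, the quantity $v_j\cdot k$ is an integer; the integral of a nontrivial character over the full period vanishes, while it equals $1$ when $v_j\cdot k=0$. Thus each factor equals $\delta_{0,v_j\cdot k}$, yielding the claimed formula. One should also note that the result is independent of the ordering chosen for $A$ (so the $\prod_{v\in A}$ notation is well-defined), which is immediate since the factors commute.

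There is essentially no obstacle here: everything reduces to the elementary fact that $\int_0^1 e^{2\pi i m t}\,\der t = \delta_{0,m}$ for $m\in\Z$. The only point worth being careful about is that the lemma is stated for test functions $e_k$, so no duality argument or distributional extension is needed — the calculation is pointwise classical.
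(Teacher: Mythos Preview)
Your proof is correct and follows essentially the same direct computation as the paper: both write $A=\{v_1,\dots,v_d\}$, factor $e_k(x)$ out of the defining integral, split the remaining integral into a product of one-dimensional integrals, and use $\int_0^1 e^{2\pi i m t}\,\der t=\delta_{0,m}$ for $m\in\Z$. If anything, your write-up is more explicit than the paper's, which stops at the product $\prod_i\int_0^1 e_k(tv_i)\,\der t$ and says ``from which the claim follows.''
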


\begin{proof}
Let $A=\{v_1,\dots,v_d\}\subset\pp$.
Then
\begin{equation}
\begin{split}
R_de_k(x,A)
&=
\idotsint_{t\in[0,1]^d}e_k(x+t_1v_1+\dots+t_dv_d)\der t_1\dots\der t_d
\\&=
e_k(x)\prod_{i=1}^d\int_0^1e_k(tv_i)\der t,
\end{split}
\end{equation}
from which the claim follows.
\end{proof}

For theorem~\ref{thm:tensor} we also need a simple result from Fourier analysis.

\begin{lemma}
\label{lma:fourier}
%
Suppose $F(x,v)$ is a polynomial in $v\in\Z^n$ and a distribution in $x\in\T^n$.
That is, $F(x,v)=\sum_{\abs{\alpha}\leq m}F_\alpha(x)v^\alpha$ for some natural number~$m$, where $F_\alpha\in\tf'$ for each~$\alpha$.
Suppose $\hat F(k,v)=0$ whenever $v\cdot k=0$, where the Fourier transform is with respect to the first variable.
Then there is a function $G(x,v)$ which is also a polynomial in $v\in\Z^n$ and a distribution in $x\in\T^n$ such that $\hat F(k,v)=v\cdot k\hat G(k,v)$ and the order of the polynomial $G(k,\cdot)$ is one lower than that of $F(k,\cdot)$.

Furthermore, if $F(\cdot,v)\in H^s$, then $G(\cdot,v)\in H^s$, and if $F(k,v)$ is homogeneous in~$v$, so is~$G(k,v)$.
\end{lemma}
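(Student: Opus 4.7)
The plan is to diagonalize the problem on the Fourier side: for each $k \in \Z^n$, I would regard $\hat F(k, \cdot)$ as a polynomial of degree $\leq m$ in $v$, factor out $v \cdot k$, and carefully track the $k$-dependence of the quotient to conclude that its coefficients form tempered sequences. The mode $k = 0$ is trivial, because $v \cdot 0 = 0$ for every $v$ forces $\hat F(0, \cdot) \equiv 0$ by hypothesis, so I set $\hat G(0, \cdot) = 0$.

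For $k \neq 0$, I would choose an index $i = i(k)$ maximizing $|k_i|$, ensuring $|k_i| \geq |k|/\sqrt n \geq 1/\sqrt n$. Viewing $\hat F(k, v)$ as a polynomial in $v_i$ with coefficients in $\R[v_j : j \neq i]$, Euclidean division by $v \cdot k = k_i v_i + \sum_{j \neq i} k_j v_j$ is legitimate since the leading coefficient $k_i$ is a nonzero scalar, and it yields
\begin{equation*}
\hat F(k, v) = (v \cdot k)\,\hat G(k, v) + R_k(v_{\hat i}),
\end{equation*}
with $\hat G(k, \cdot)$ of total degree $\leq m-1$ and $R_k$ independent of $v_i$. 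To show $R_k \equiv 0$, for each $w \in \Z^{n-1}$ I construct $v \in \Z^n$ by $v_j = k_i w_j$ for $j \neq i$ and $v_i = -\sum_{j \neq i} k_j w_j$; then $v \cdot k = 0$, so the hypothesis yields $\hat F(k, v) = 0$ and hence $R_k(k_i w) = 0$. Since $k_i \neq 0$, the polynomial $R_k$ vanishes on the Zariski-dense lattice $k_i \Z^{n-1}$, forcing $R_k \equiv 0$; uniqueness of $\hat G(k, \cdot)$ for $k \neq 0$ follows because $\R[v]$ is an integral domain.

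The main obstacle is verifying that the resulting $\hat G$ actually corresponds to a $\tf'$-valued polynomial in $v$ with the claimed Sobolev regularity and homogeneity. Reading off the division as a triangular recursion in the coefficients of the $v_i$-expansion, each coefficient of $\hat G(k, v)$ (as a polynomial in $v$) can be written as a finite sum of terms of the form $\bigl(\prod_l (k_{j_l}/k_i)\bigr)\cdot \hat F_\alpha(k)/k_i$. The ratios $|k_j/k_i| \leq 1$ by the choice of $i$, so each such term is dominated by $\sqrt n\,|\hat F_\alpha(k)|/|k|$. This single bound produces both tempered growth of each coefficient of $\hat G$ (hence membership in $\tf'$) and, via $\langle k \rangle^{2s}/|k|^2 \leq \langle k \rangle^{2s}$ for $|k| \geq 1$, transfer of the $H^s$ summability from $F$ to $G$. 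Homogeneity survives the same recursion: if $\hat F(k, \cdot)$ is homogeneous of degree $m$, a straightforward induction on the recursion shows each layer produces a homogeneous polynomial of the appropriate degree, making $\hat G(k, \cdot)$ homogeneous of degree $m - 1$.
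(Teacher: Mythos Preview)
Your argument is correct, but it takes a different route from the paper's proof. The paper fixes $k\neq0$ and performs an orthogonal decomposition $v=k\abs{k}^{-1}v_\parallel+v_\perp$ with $v_\parallel=\abs{k}^{-1}k\cdot v$; writing $\hat F(k,v)$ as a polynomial in $(v_\parallel,v_\perp)$, the hypothesis kills all terms with no factor of $v_\parallel$, and one factors $v_\parallel$ out directly. You instead pick a coordinate $v_i$ with $\abs{k_i}$ maximal and do Euclidean division by $v\cdot k$ in $\R[v_{\hat\imath}][v_i]$, killing the remainder by evaluating on the lattice $\{v:v\cdot k=0\}$. Both constructions produce the same $\hat G(k,\cdot)$ by the integral-domain uniqueness you note; the paper's decomposition is coordinate-free and geometric, while yours is algebraic and yields the explicit coefficient bound $\abs{\hat G_\beta(k)}\lesssim\abs{k}^{-1}\max_\alpha\abs{\hat F_\alpha(k)}$, which immediately gives temperedness of each $G_\beta$. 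For the $H^s$ statement the paper's argument is actually shorter than yours: since $v,k\in\Z^n$ and $v\cdot k\neq0$ on the support of $\hat F(\cdot,v)$, one has $\abs{v\cdot k}\geq1$ and hence $\abs{\hat G(k,v)}\leq\abs{\hat F(k,v)}$, so $G(\cdot,v)\in H^s$ follows directly from $F(\cdot,v)\in H^s$ without passing through the individual coefficients $F_\alpha$. Your route requires the (easy, Vandermonde-type) observation that $F(\cdot,v)\in H^s$ for all $v\in\Z^n$ implies each $F_\alpha\in H^s$; you might make that step explicit.
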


\begin{proof}
We assume that~$F$ is homogeneous in~$v$ of degree~$m$; the proof of the general case is similar.
We also assume that $F(\cdot,v)\in H^s$.
Since $\bigcup_{s\in\R}H^s=\tf'$, this leads to no loss of generality.




Fix any $k\neq0$.
Now $\hat F(k,v)$ is a homogeneous polynomial in~$v\in\Z^n$, and we can uniquely extend it to a homogeneous polynomial in~$v\in\R^n$.
We decompose~$v\in\R^n$ in two components $v_\parallel\in\R$ and $v_\perp\in\R^n$ that satisfy $v=k\abs{k}^{-1}v_\parallel+v_\perp$ and $v_\perp\perp k$.
To fix the sign, we let $v_\parallel=\abs{k}^{-1}k\cdot v\in\R$.
For some coefficients $a_{k,m,\alpha}\in\C$ we have
\begin{equation}
\label{eq:vv1}
\hat F(k,v)
\eqqcolon
\hat F(k,v_\parallel,v_\perp)
=
\sum_{\abs{\alpha}\leq m}a_{k,m,\alpha}v_\perp^\alpha v_\parallel^{m-\abs{\alpha}},
\end{equation}
where~$\alpha$ is a multi-index.
But we know that $\hat F(k,0,v_\perp)=0$, so
\begin{equation}
\label{eq:vv2}
\sum_{\abs{\alpha}=m}a_{k,m,\alpha}v_\perp^\alpha=0.
\end{equation}
Combining equations~\eqref{eq:vv1} and~\eqref{eq:vv2} gives
\begin{equation}
\begin{split}
\hat F(k,v_\parallel,v_\perp)
&=
\sum_{\abs{\alpha}< m}a_{k,m,\alpha}v_\perp^\alpha v_\parallel^{m-\abs{\alpha}}
\\&=
v_\parallel\sum_{\abs{\alpha}\leq m-1}a_{k,m,\alpha}v_\perp^\alpha v_\parallel^{m-1-\abs{\alpha}}.
\end{split}
\end{equation}
We now set
\begin{equation}
\Gamma(k,v_\parallel,v_\perp)
\coloneqq
\abs{k}^{-1}\sum_{\abs{\alpha}\leq m-1}a_{k,m,\alpha}v_\perp^\alpha v_\parallel^{m-1-\abs{\alpha}}.
\end{equation}
Then, after reverting to the variable $v=k\abs{k}^{-1}v_\parallel+v_\perp$, we have $\hat F(k,v)=v\cdot k\Gamma(k,v)$.

For $k=0$ we let $\Gamma(0,v)=0$.
By construction it is clear that $\Gamma(k,v)$ is a homogeneous polynomial in~$v$ of degree~$m-1$.
We define the function~$G$ by letting $\Gamma(k,v)=\hat G(k,v)$.
It satisfies $\hat F(k,v)=v\cdot k\hat G(k,v)$.

It remains to prove the claim that $G(\cdot,v)\in H^s$ for every $v\in\Z^n$.
Since $F(\cdot,v)\in H^s$, we have $\sum_{k\in\Z^n}\abs{\hat{F}(k,v)}^2\vev{k}^{2s}<\infty$ for every $v\in\Z^n$.
But now $\abs{v\cdot k}^{-1}\leq1$ on the support of $\hat F$, so also $\sum_{k\in\Z^n}\abs{\hat G(k,v)}^2\vev{k}^{2s}<\infty$ for every $v\in\Z^n$.
\end{proof}

\begin{lemma}
\label{lma:Hs}
In the notation of theorem~\ref{thm:tensor}, if $h\in H^s$ is a symmetric tensor field on~$\T^n$ and also $\sigma\nabla h\in H^s$, then~$h\in H^{s+1}$.
\end{lemma}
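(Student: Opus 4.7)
The plan is to pass to the Fourier side in the torus variable and reduce the whole question to a fibrewise algebraic estimate for multiplication by a linear form on homogeneous polynomials in $\xi$. On the flat torus the Levi-Civita connection is just the partial derivative, so in the polynomial notation of~\eqref{eq:tensor-poly} one has $(\sigma\nabla h)(x,\xi)=\xi^{j}\partial_{j}h(x,\xi)$, which yields the key identity
\begin{equation}
\widehat{\sigma\nabla h}(k,\xi)=2\pi i\,(k\cdot\xi)\,\hat h(k,\xi).
\end{equation}
This is the same factorization structure already appearing in lemma~\ref{lma:fourier}; here I need a quantitative version of it.

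Next I would identify the relevant Sobolev norms with polynomial norms on the Fourier side. For a symmetric $r$-tensor field $g$ the squared norm $\aabs{g}_{H^s}^{2}$ is, by definition, the sum over components, so it equals $\sum_{k\in\Z^n}\vev{k}^{2s}\sum_{I}\abs{\hat g_I(k)}^{2}$. The inner sum is a positive definite norm on the finite dimensional space of homogeneous polynomials of degree $r$ in $\xi$, equivalent (with constants depending only on $m$ and $n$) to any other reasonable norm, for instance the $L^{2}(S^{n-1})$ norm of the polynomial $\xi\mapsto\hat g(k,\xi)$.

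The heart of the proof is the coercivity estimate
\begin{equation}
\aabs{(k\cdot\xi)P(\xi)}^{2}\geq c\abs{k}^{2}\aabs{P(\xi)}^{2},
\end{equation}
valid for every $k\in\R^n$ and every homogeneous polynomial $P$ of degree $m-1$ in $\xi$, with a constant $c>0$ depending only on $m$ and $n$; I expect this to be the main obstacle. For fixed $k\neq 0$ the linear map $P\mapsto(k\cdot\xi)P$ between finite dimensional polynomial spaces is injective because $\R[\xi]$ is an integral domain. Homogeneity in $k$ reduces the claim to $\abs{k}=1$, after which the smallest singular value of this map is a positive continuous function on the compact sphere $S^{n-1}$, hence bounded below uniformly by some $c>0$.

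Once the coercivity estimate is in hand, the proof finishes by a one-line Plancherel computation. Summing over $k\in\Z^n$ against the weight $\vev{k}^{2s}$ and using $\vev{k}^{2}=1+\abs{k}^{2}$,
\begin{equation}
\aabs{h}_{H^{s+1}}^{2}=\sum_{k\in\Z^n}\vev{k}^{2s}\bigl(1+\abs{k}^{2}\bigr)\aabs{\hat h(k,\cdot)}^{2}\lesssim\aabs{h}_{H^{s}}^{2}+\aabs{\sigma\nabla h}_{H^{s}}^{2},
\end{equation}
which is finite by hypothesis, so $h\in H^{s+1}$ as claimed.
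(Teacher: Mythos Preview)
Your argument is correct and is actually cleaner than the paper's. Both proofs rest on the same underlying fact—that the symbol of $\sigma\nabla$, namely multiplication by $k\cdot\xi$ on homogeneous polynomials, is injective—but they package it differently. The paper restricts to the sphere bundle, decomposes $h$ and $f=\sigma\nabla h$ into fibrewise spherical harmonics, splits the geodesic vector field as $X=X_{+}+X_{-}$, and then invokes that $X_{+}$ is overdetermined elliptic to peel off one harmonic degree at a time by induction. Your approach bypasses all of this machinery: you stay on the Fourier side in the base variable, treat each frequency $k$ separately, and use the finite-dimensional coercivity $\aabs{(k\cdot\xi)P}\geq c\abs{k}\aabs{P}$ obtained directly from injectivity of multiplication by a nonzero linear form together with compactness of $S^{n-1}$. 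The compactness/continuity argument for the smallest singular value is exactly what replaces the appeal to overdetermined ellipticity.

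What your route buys is a self-contained, elementary proof tailored to the flat torus, with an explicit elliptic estimate $\aabs{h}_{H^{s+1}}\lesssim\aabs{h}_{H^{s}}+\aabs{\sigma\nabla h}_{H^{s}}$ falling out in one line. What the paper's route buys is that the spherical-harmonic/$X_{\pm}$ framework is the one that generalises to curved manifolds (Anosov surfaces, negative curvature), which is why those references are cited; on $\T^{n}$ it is overkill. One small point worth making explicit in your write-up: the norm equivalence you invoke between $\sum_{I}\abs{\hat g_{I}(k)}^{2}$ and a polynomial norm such as $L^{2}(S^{n-1})$ has constants depending only on the degree and on $n$, not on $k$, so the coercivity constant $c$ is genuinely uniform.
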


\begin{proof}
The proof is based on fibrewise spherical harmonics and a decomposition of the geodesic vector field.
For details of these see~\cite{DS:killing-tensor,GK:spectral-negative-curvature,PSU:anosov-mfld}.

We denote $f=\sigma\nabla h$. 
As symmetric tensor fields~$h$ and~$f$ can be thought of as homogeneous polynomials on (each fibre of)~$T\T^n$ via~\eqref{eq:tensor-poly} and can be restricted to functions on the sphere bundle $S\T^n=\{(x,v)\in T\T^n;\abs{v}=1\}$.
If the degree of~$h$ is $m+1$, then $h=h_0+\cdots+h_{m-1}$ and $f=f_0+\cdots+f_m$, where~$h_i$ and~$f_i$ are spherical harmonics of order~$i$.
By assumption $h,f\in H^s$, that is, $h_i,f_i\in H^s$ for all~$i$.

Functions on~$S\T^n$ with fixed degree~$l$ (in terms of spherical harmonics) can be identified with symmetric trace free tensor fields of order~$l$ on~$\T^n$.
But the tensor~$h$ need not be trace free, corresponding to the fact that we may have $h_i\neq0$ for $i<m-1$ and similarly for~$f$.

If~$X$ denotes the geodesic vector field on $S\T^n$, we have after the aforementioned identification that $f=Xh$.
The geodesic vector field decomposes as $X=X_++X_-$, where $X_\pm$ changes the order of the spherical harmonic by~$\pm1$.
Applying this decomposition of $X$ to the equation $f=Xh$ with~$f$ and~$h$ written in terms of spherical harmonics and collecting terms of order~$m$, we have $X_+h_{m-1}=f_m$.
Since $f_m\in H^s$ and $X_+$ is overdetermined elliptic, we have $h_{m-1}\in H^{s+1}$.
Similarly the equation for terms of order $m-1$ gives $h_{m-2}\in H^{s+1}$.
For order $m-2$ we have $X_+h_{m-3}+X_-h_{m-1}=f_{m-2}$, so $X_+h_{m-3}=f_{m-2}-X_-h_{m-1}\in H^s$ and thus $h_{m-3}\in H^{s+1}$.
Carrying on inductively, we find indeed $h\in H^{s+1}$.
\end{proof}

\section{Proofs of theorems}
\label{sec:proof}

With the tools given above, the proofs of our results are elementary.
The proof of corollary~\ref{cor:brt} is essentially the same as given in~\cite[Proposition~30]{I:refl}.

\subsection{Proof of theorem~\ref{thm:inj}}

As the Fourier transform is injective on~$\tf'$, it suffices to reconstruct the Fourier coefficients of $f\in\tf'$ from~$R_df$.
To that end, let $k\in\Z^n$.
Combining equation~\eqref{eq:R-F-def} and lemma~\ref{lma:Re} proves equation~\eqref{eq:radon-fourier}.
By lemma~\ref{lma:A-perp} there is $A\in G^n_d$ such that $k\cdot v=0$ for all $v\in A$, so
\begin{equation}
\label{eq:R-F-thm}
\widehat{R_df}(k,A)=\hat f(k).
\end{equation}
Thus we may indeed construct~$\hat f$ from $\widehat{R_df}$.

\subsection{Proof of theorem~\ref{thm:range}}

The theorem follows from equation~\eqref{eq:radon-fourier}.
If~$g\in E$, then by equation~\eqref{eq:radon-fourier} the function~$R_dg$ has the required form.
If $F\in(\tf')^{G^n_d}$ is of the form given in the theorem, then~$F=R_dg$ again by~\eqref{eq:radon-fourier}.

\subsection{Proof of theorem~\ref{thm:limit}}


By theorem~\ref{thm:inj} we know the Fourier coefficients of~$f$ from~$R_df$.
If $f\in C(\T^n)$, convolutions of $f$ with the Fej\'er kernel converge uniformly to~$f$ and are smooth.

If $f\in L^p(\T^n)$, then using the same approximate identity given by the Fej\'er kernel, we obtain a convergent sequence\footnote{The Fourier series converges to~$f$ if $1<p<\infty$. For $f\in L^1$ this is not always the case (see e.g.~\cite[p.~50, exercise~3]{book-katznelson}).} in~$L^p$.

\subsection{Proof of theorem~\ref{thm:stab}}

The theorem follows immediately from equations~\eqref{eq:radon-fourier} and~\eqref{eq:R-F-thm}.

\subsection{Proof of corollary~\ref{cor:brt}}

Dilating the box $[0,L_1]\times\cdots\times[0,L_n]$ by any factor in a coordinate direction preserves all periodic broken rays.
Thus by scaling in each direction we may assume that$L_1=\dots=L_n=1/2$.

Let~$f$ be a sum of a compactly supported distribution in $(0,1/2)^n$ and an~$L^1$ function on $Q=[0,1/2]^n$.
We define $\tilde f$ on $[-1/2,1/2]^n$ by letting
\begin{equation}
\tilde{f}
=
f\circ\zeta,
\end{equation}
where
\begin{equation}
\zeta(x_1,\dots,x_n)
=
(\abs{x_1},\dots,\abs{x_n}).
\end{equation}
We can naturally identify $[-1/2,1/2]^n$ with~$\T^n$, and we have thus $\tilde f\in\tf(\T^n)$.

Let us assume for the moment that $f\in C(Q)$ and thus $\tilde f\in C(\T^n)$.
Let~$\gamma$ be any periodic geodesic in~$\T^n$.
Then $\zeta\circ\gamma$ is a periodic billiard trajectory on~$Q$, and the integral of~$f$ vanishes over it by assumption.
That is, $\tilde f\circ\gamma=f\circ(\zeta\circ\gamma)$ has zero integral and this holds for all~$\gamma$, so $R_1\tilde f=0$.
Thus, by theorem~\ref{thm:inj} $\tilde f=0$ and so also~$f=0$.

The definition of the integral of~$f$ over a periodic billiard trajectory can be naturally generalized to the case of a sum of a compactly supported distribution and an~$L^1$ function by transforming the situation to a torus and using the generalization given in section~\ref{sec:distr}.

\subsection{Proof of theorem~\ref{thm:tensor}}

Let $f\in\tf'$ be a tensor field which satisfies $R^mf=0$ in the sense of~\eqref{eq:distr-def-tensor}.
Analogously to~\eqref{eq:radon-fourier}, we have
\begin{equation}
\widehat{R^mf}(k,v)
=
\ip{f(\cdot,v)}{Re_{-k}(\cdot,v)}
=
\delta_{0,v\cdot k}\ip{f(\cdot,v)}{e_{-k}}
=
\delta_{0,v\cdot k}\hat f(k,v)
\end{equation}
for all $v\in\pp$ and $k\in\Z^n$.

Using the assumption $R^mf=0$ and lemma~\ref{lma:fourier}, we have thus $\hat{f}(k,v)=k\cdot v \hat g(k,v)$ for some function~$g$ for which $g(\cdot,v)\in\tf'$ for all $v\in\pp$ and~$g$ is a homogeneous polynomial of degree $m-1$ in~$v$.
Denoting $D=v\cdot\nabla_x$, we have that $\widehat{Dg}(k,v)=ik\cdot v\hat g(x,v)$, so $f(x,v)=-iDg(x,v)$.

Letting $h=-ig$, we have $f=Dh$ and $h\in\tf'$.
Because of its homogeneity (as a polynomial in~$v$) the function~$h$ can be considered a tensor field of order $m-1$ on~$\T^n$.
Since~$f$ is symmetric, the equation $f=Dh$ remain true if we replace~$h$ with its symmetrization and symmetrize~$Dh$.

By the last part of lemma~\ref{lma:fourier}, $f\in H^s$ implies that we can choose $h\in H^s$.
But since~$h$ is symmetric, we have in fact $h\in H^{s+1}$ by lemma~\ref{lma:Hs}.

\subsection{Proof of corollary~\ref{cor:brt-tensor}}

The proof is completely analogous to that of corollary~\ref{cor:brt} and is left to the reader.

\section*{Acknowledgements}
The author was partly supported by the Academy of Finland (Centre of Excellence in Inverse Problems Research).
The author wishes to thank Mikko Salo for help with lemma~\ref{lma:Hs} and for making several useful remarks and the referees for comments.

\bibliographystyle{abbrv}
\bibliography{disk}

\end{document}